\documentclass[12pt]{amsart}

\usepackage[margin=1.08in]{geometry}
\usepackage{amsmath,amssymb,amsthm,mathtools}
\usepackage[colorlinks=true,linkcolor=blue,citecolor=blue,urlcolor=blue]{hyperref}
\usepackage{microtype}

\usepackage[nodisplayskipstretch]{setspace}
\theoremstyle{plain}
\newtheorem{theorem}{Theorem}[section]
\newtheorem{proposition}[theorem]{Proposition}
\newtheorem{lemma}[theorem]{Lemma}
\newtheorem{corollary}[theorem]{Corollary}
\theoremstyle{definition}
\newtheorem{definition}[theorem]{Definition}
\newtheorem{convention}[theorem]{Convention}
\theoremstyle{remark}
\newtheorem{remark}[theorem]{Remark}

\newcommand{\PP}{\mathbb P}
\newcommand{\CC}{\mathbb C}
\newcommand{\ZZ}{\mathbb Z}
\newcommand{\cE}{\mathcal E}
\newcommand{\cN}{\mathcal N}
\newcommand{\cD}{\mathcal D}
\newcommand{\cM}{\mathcal M}
\newcommand{\resz}{\operatorname*{res}_{z}}
\newcommand{\Aut}{\operatorname{Aut}}
\newcommand{\Part}{\operatorname{Part}}
\newcommand{\eps}{\varepsilon}
\newcommand{\del}{\delta}
\newcommand{\varsig}{\varsigma}

\numberwithin{equation}{section}

\title[Negative contacts and Toda]{On the Toda hierarchy for the stationary theory of the tube with negative contact orders}

\author{Hsian-Hua Tseng}
\address{Department of Mathematics\\ The Ohio State University\\ 100 Math Tower, 231 West 18th Avenue\\ Columbus, OH 43210\\ USA}
\email{hhtseng@math.ohio-state.edu}

\date{\today}

\hypersetup{
  pdftitle={On the Toda hierarchy for the stationary theory of the tube with negative contact orders},
  pdfauthor={Hsian-Hua Tseng}
}

\begin{document}

\begin{abstract}
We construct a master $2$-Toda tau sequence for the stationary relative theory of $(\PP^1,0,\infty)$ with any fixed finite lists of negative contacts.  The negative-contact blocks are encoded by finite products of exponentials of infinite-wedge Lie algebra elements, with the block at $\infty$ taken as the adjoint of the entire corresponding left block.  After adjoining independent positive-profile Toda times, the resulting charged-vacuum matrix elements form a genuine $2$-Toda tau sequence.  Following the standard $Q,u$ specialization, its positive signed-degree coefficients recover the stationary relative Gromov--Witten invariants with the prescribed positive and negative contacts, which occur in and are constrained by the induced coefficientwise Hirota equations.

The master tau sequence is larger than the enumerative sector: it generally contains nonzero, non-vacuum signed-degree-zero coefficients, as well as auxiliary nonzero lattice components, for which no enumerative interpretation is asserted.  Consequently the Hirota equations form a coupled master system and do not, without additional input, close on the positive-degree invariants alone.  The construction uses the large-root definition of negative contact, the Johnson--Kumaran--Wu operator formula for the tube, and the Kyoto/Pl\"ucker construction of the $2$-Toda hierarchy.
\end{abstract}

\maketitle
\tableofcontents

\section{Introduction and scope}

Let $\omega\in H^2(\PP^1)$ be the point class.  We consider the disconnected non-equivariant stationary relative theory of the {\em tube}\footnote{The {\em cap} geometry is $(\mathbb{P}^1,0)$ or $(\mathbb{P}^1,\infty)$. }
\[
    (\PP^1,0,\infty),
\]
with relative contact orders allowed to be any nonzero integers.  Positive contact orders are ordinary relative tangency orders.  Negative contact orders are understood in the large-root sense of Fan--Wu--You, as recalled and used by Kumaran--Wu.  Thus a negative contact $b<0$ at $0$ is represented on the $r$th root stack by the twisted sector of age $(r+b)/r$, and the invariant is obtained after multiplying by one factor of $r$ per negative contact and extracting the constant term in $r$; for the tube one performs this construction at both relative points and extracts the $r^0s^0$ coefficient.  See \cite[Defs.~2.1--2.3 and Sec.~2.2]{KW}, based on \cite{FWY20,FWY21}.

The operator formula for negative-contact invariants in the stationary non-equivariant theory is obtained by Kumaran--Wu from Johnson's orbifold operator formula \cite{JohnsonThesis,Johnson,KW}. The main theorem on integrable hierarchy, Theorem~\ref{thm:main}, is stated in this setting.  It does not assert that a fixed negative-contact coefficient is itself a scalar tau-function.  Instead, after a finite system of nilpotent variables is introduced, the positive-degree chosen-contact generating functions occur among the coefficients of one genuine $2$-Toda tau sequence and satisfy the coefficient equations induced from that larger system.

Kumaran--Wu state the negative-contact cap and tube formulae for positive signed degree $d>0$, and every non-vacuum enumerative identification in this note is made in that range. At $0$, write a relative condition as
\[
  (\lambda;\mathbf b),
  \qquad
  \lambda\text{ a positive partition},\quad
  \mathbf b=(b_1,\ldots,b_m),\quad b_i<0.
\]
At $\infty$, write
\[
  (\rho;\mathbf c),
  \qquad
  \rho\text{ a positive partition},\quad
  \mathbf c=(c_1,\ldots,c_{m'}),\quad c_j<0.
\]
The signed degree condition is
\begin{equation}
\label{eq:signed-degree}
  d=|\lambda|+\sum_{i=1}^m b_i
   =|\rho|+\sum_{j=1}^{m'} c_j>0.
\end{equation}

The empty-profile stationary degree-zero vacuum is also identified and has constant term $1$ in the descendant variables.  The master tau sequence nevertheless contains further algebraic coefficients of signed degree zero, which can be nonzero.  We retain them as auxiliary sectors because projecting them away need not preserve group-likeness or the bilinear identities; no relative-invariant interpretation of those sectors is claimed.

The main theorem uses the positive-profile variables $\mathbf p=(p_1,p_2,\ldots)$ and $\mathbf q=(q_1,q_2,\ldots)$.  Ordered positive contacts are recovered by labelled specialization; see Corollary~\ref{cor:chosen-ordered}.

The main point is not merely that a fermionic matrix element is a Toda tau-function. The nontrivial issue is to construct a single group-like operator simultaneously encoding prescribed negative contacts, with the correct ordering and adjoint structure, and to identify precisely which coefficients recover the negative-contact relative invariants.

\subsection{Generative-AI disclosure}
{\em This paper, generated by GPT-5.5 and 5.6 Sol Pro, represents an outcome of the human author's ongoing experiment whose grand goal is to see how the human author's research in mathematics can benefit from generative AI.

As found in general in \cite{FWY20,FWY21} and carried out explicitly in \cite{KW}, relative Gromov-Witten invariants of the tube $(\mathbb{P}^1, 0,\infty)$ with possibly negative contact orders can be obtained from orbifold Gromov-Witten invariants of the orbifold $\mathbb{P}^1_{r,s}$ for $r, s$ sufficiently large. As one can deduce results on integrable hierarchy from the operator formula for Gromov-Witten invariants of $\mathbb{P}^1_{r,s}$ proven in \cite{JohnsonThesis,Johnson}, it is natural to try to obtain results on integrable hierarchy for relative Gromov-Witten theory of the tube with negative contact orders from this ``sufficiently large $r,s$'' consideration. There is no doubt that this idea was known to experts: it was known to the human author for several years.  

It was expected that carrying out this idea requires some manipulations of equations. In view of the recent advance in generative AI, the human author decided to ask GPT-5.5 Pro to carry out this idea. The present paper is the outcome of several rounds of interactions with GPT-5.5 and 5.6 Sol Pro. The human author reviewed the results but made minimal edits to the paper.}


\subsection{Acknowledgment}
We thank F. You for corrections.

\section{Operator formulae}

\subsection{The infinite wedge}

Let $V$ be the vector space with basis indexed by half-integers, and let
\begin{equation}
\label{eq:charged-Fock}
  \mathcal F=\bigoplus_{n\in\ZZ}\mathcal F^{(n)}
\end{equation}
be the full charged infinite wedge.  Write $\lvert0\rangle=v_\emptyset$ for the charge-zero vacuum.  If $T$ is the standard translation operator, set
\begin{equation}
\label{eq:charged-vacua}
  \lvert n\rangle=T^n\lvert0\rangle,
  \qquad
  \langle n\rvert=\langle0\rvert T^{-n}.
\end{equation}
For a charge-preserving operator $A$, write $\langle A\rangle=\langle0\rvert A\lvert0\rangle$.  Let $\psi_i$ and $\psi_j^*$ be the standard fermionic creation and annihilation operators, and define the normally ordered matrix units by
\begin{equation}
\label{eq:matrix-units}
  E_{ij}:=:\!\psi_i\psi_j^*\!:\,.
\end{equation}
The following standard operators act charge-preservingly on $\mathcal F$:
\begin{equation}
\label{eq:E-def}
 \cE_a(z)=
 \sum_{k\in\ZZ+\frac12} e^{z(k-a/2)}E_{k-a,k}
 +\frac{\delta_{a,0}}{e^{z/2}-e^{-z/2}}.
\end{equation}
They satisfy
\begin{equation}
\label{eq:E-comm}
 [\cE_a(z),\cE_b(w)]
 =\varsig(aw-bz)\cE_{a+b}(z+w),
 \qquad
 \varsig(z)=e^{z/2}-e^{-z/2}.
\end{equation}
The Heisenberg operators are $\alpha_k=\cE_k(0)$ for $k\ne0$, and
\begin{equation}
\label{eq:heisenberg}
 [\alpha_k,\alpha_l]=k\delta_{k+l,0}.
\end{equation}
We expand
\begin{equation}
\label{eq:E-coeff}
  \cE_a(z)=\sum_{r\in\ZZ}\cE_a[r]z^{r+1}.
\end{equation}
With respect to the standard wedge inner product,
\begin{equation}
\label{eq:E-adjoint}
  \cE_a[r]^\dagger=\cE_{-a}[r].
\end{equation}
Throughout, $\dagger$ denotes the algebraic adjoint anti-involution on the completed infinite-wedge operator algebra:
\begin{equation}
\label{eq:algebraic-adjoint}
  (AB)^\dagger=B^\dagger A^\dagger,
  \qquad
  \alpha_k^\dagger=\alpha_{-k}.
\end{equation}
It fixes the formal scalar coefficient ring---in particular $Q$, $u$, and, when they are introduced below, the nilpotent variables $\eps_i,\del_j$---and is not complex conjugation of formal coefficients.  Thus $\cE_0[k]$ is the stationary descendent operator corresponding to $\tau_k(\omega)$, and $\cE_b[0]$ with $b<0$ is the negative-contact operator used by Kumaran--Wu.  These conventions are those of Okounkov--Pandharipande and Kumaran--Wu \cite[Sec.~2]{OPcompleted}, \cite[Sec.~3]{KW}.

The energy operator
\begin{equation}
\label{eq:energy-operator}
  H=\sum_{k\in\ZZ+\frac12}kE_{kk}
\end{equation}
satisfies
\begin{equation}
\label{eq:energy-commutator}
  [H,\cE_a[r]]=-a\cE_a[r].
\end{equation}
The charge-zero vacuum has energy $0$, and the spectrum of $H$ on $\mathcal F^{(0)}$ is nonnegative.  These facts will be used to make the signed-degree vanishing explicit.

The translation operator satisfies
\begin{equation}
\label{eq:T-identity}
   T^{-n}\cE_a(z)T^n=e^{nz}\cE_a(z),
\end{equation}
including the scalar term in $\cE_0(z)$.  In particular, the central anomaly in the normally ordered diagonal operator is exactly what makes \eqref{eq:T-identity} true for $a=0$.

\subsection{Kumaran--Wu's negative-contact tube formula}

Let
\[
  \vec\mu_0=(a_1,\ldots,a_{\ell_0},b_1,\ldots,b_{m_0}),
  \qquad
  \vec\mu_\infty=(a'_1,\ldots,a'_{\ell_\infty},b'_1,\ldots,b'_{m_\infty}),
\]
where $a_i,a'_j>0$ and $b_i,b'_j<0$.  We use the conventions
\begin{equation}
\label{eq:empty-block-conventions}
  \Part(\varnothing)=\{\varnothing\},
  \qquad
  \prod_{x\in\varnothing}(\cdots)=1.
\end{equation}
Thus a product indexed by an empty set or by the unique empty set partition is $1$; in particular, $\mathcal B_{\varnothing}=1$.  For a nonempty ordered subset $B=\{j_1<\cdots<j_r\}$ of the negative labels, set
\begin{equation}
\label{eq:block-weight}
  b_B=\sum_{j\in B}b_j,
  \qquad
  N_B=(r-1)!\prod_{h=2}^{r}b_{j_h}.
\end{equation}
For a set partition $P$ of the negative labels, ordered by increasing minima of blocks, define
\begin{equation}
\label{eq:B-operator}
  \mathcal B_{\mathbf b}
  =\sum_{P\in\Part([m_0])}
    \left(\prod_{B\in P}N_B\right)
    \prod_{B\in P}^{\longrightarrow}\cE_{b_B}[0].
\end{equation}
For the right-hand negative list $\mathbf c=(c_1,\ldots,c_{m_\infty})$, define for every nonempty ordered subset $C=\{j_1<\cdots<j_r\}$
\[
  c_C=\sum_{j\in C}c_j,
  \qquad
  N_C=(r-1)!\prod_{h=2}^{r}c_{j_h}.
\]
Let $\mathcal B_{\mathbf c}$ denote the operator obtained from \eqref{eq:B-operator} by replacing $\mathbf b$ with $\mathbf c$.  The right-hand block in the tube pairing is the adjoint of this \emph{entire} operator:
\begin{equation}
\label{eq:Bdagger-operator}
  \mathcal B^\dagger_{\mathbf c}
  :=\left(\mathcal B_{\mathbf c}\right)^\dagger
  =\sum_{P'\in\Part([m_\infty])}
    \left(\prod_{C\in P'}N_C\right)
    \prod_{C\in P'}^{\longleftarrow}\cE_{c_C}[0]^\dagger.
\end{equation}
Here the blocks of $P'$ are first listed in increasing order of their minima; the left-pointing product means that the corresponding adjoint factors occur in the reverse order.  Thus, if the blocks are $C_1,\ldots,C_s$ with increasing minima, the product in \eqref{eq:Bdagger-operator} is
$\cE_{c_{C_s}}[0]^\dagger\cdots\cE_{c_{C_1}}[0]^\dagger$.
This is also the order dictated by the cited source: \cite[Defs.~4.10--4.11]{KW} first orders the blocks by increasing minima and defines $\cE_{\mathbf c_{P'}}[0]$ as the resulting ordered product, while \cite[Thm.~4.18]{KW} inserts $\cE_{\mathbf c_{P'}}^{*}[0]$ at $\infty$.  The star is therefore applied to the composite ordered block, and adjunction reverses its factors.  Factorwise adjunction without this reversal is not the whole adjoint and is not generally invariant under relabeling.  Equation~\eqref{eq:Bdagger-operator} spells out the source convention used in every enumerative statement below.

Kumaran--Wu prove the stationary tube formula
\begin{align}
\label{eq:KW}
&\left\langle \vec\mu_0\,\middle|\,
    \prod_{i=1}^n\tau_{k_i}(\omega)
  \,\middle|\,\vec\mu_\infty\right\rangle^{\bullet}
=
\frac{1}{\prod_{i=1}^{\ell_0}a_i\prod_{j=1}^{\ell_\infty}a'_j}
\left\langle
 \prod_{i=1}^{\ell_0}\alpha_{a_i}\,
 \mathcal B_{\mathbf b}\,
 \prod_{i=1}^{n}\cE_0[k_i]\,
 \mathcal B^\dagger_{\mathbf c}\,
 \prod_{j=1}^{\ell_\infty}\alpha_{-a'_j}
\right\rangle.
\end{align}
This is the stationary tube identity of \cite[Thm.~4.18]{KW}, with its starred composite right block expanded as the whole adjoint; the cap analogue is \cite[Thm.~4.16]{KW}.  Writing the order explicitly in \eqref{eq:Bdagger-operator} removes any ambiguity that may be hidden by a star notation for the right block.  When there are no negative contacts, \eqref{eq:KW} specializes to the Okounkov--Pandharipande stationary tube formula \cite[Sec.~3]{OPcompleted}.  The algebraic tau construction below is independent of the enumerative provenance of \eqref{eq:KW}, while the coefficient identification uses precisely the displayed whole-adjoint convention.

\begin{convention}[Genus and Toda dispersion]
\label{conv:u}
The published formula \eqref{eq:KW} is written in the usual genus-suppressed stationary convention.  For a possibly disconnected source curve $C$, define the virtual genus index $g$ by
\begin{equation}
\label{eq:disconnected-genus}
  2g-2=-\chi(C),
\end{equation}
so $g$ need not be a nonnegative connected genus.  We retain a formal Toda dispersion parameter $u$ by the homogeneous rescaling
\begin{equation}
\label{eq:u-rescaling}
  \frac{\alpha_{a}}{a}\mapsto \frac{\alpha_a}{ua},
  \qquad
  \sum_{k\ge0}x_k\cE_0[k]\mapsto \sum_{k\ge0}x_ku^k\cE_0[k],
\end{equation}
which is equivalent to representing a stationary variable $z$ by $u^{-1}\cE_0(uz)$.  Setting $u=1$ recovers \eqref{eq:KW}.  The hierarchy proof itself is formal in $u$ and remains valid after setting $u=1$.
\end{convention}

\begin{lemma}[Restoring the genus variable]
\label{lem:u-restore}
With the conventions of Kumaran--Wu's large-root calculation before the final specialization $u=1$, the stationary negative-contact tube formula \eqref{eq:KW} has the following homogeneous $u$-refinement.  Each positive relative part $a$ contributes $\alpha_a/(ua)$ on the left and $\alpha_{-a}/(ua)$ on the right; each stationary descendent $\tau_k(\omega)$ contributes $u^k\cE_0[k]$; and the negative-contact block operators $\mathcal B_{\mathbf b}$ and $\mathcal B^\dagger_{\mathbf c}$ are unchanged.  The coefficient of a fixed monomial in the stationary variables is then the disconnected genus series $\sum_g u^{2g-2}\langle\cdots\rangle^\bullet_{g,d}$, with $g$ defined by \eqref{eq:disconnected-genus}.
\end{lemma}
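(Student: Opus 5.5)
The plan is a dimension count.  Since the formula with $u$ suppressed, \eqref{eq:KW}, is already available, it suffices to show that inserting the stated powers of $u$ multiplies each term by exactly $u^{2g-2}$.  Because the $u$-insertions are monomial, the point is to verify that each disconnected relative moduli space of signed degree $d$ contributes to the coefficient of a fixed monomial only in a single power of $u$, and that this power equals $u^{2g-2}$ with $g$ defined by \eqref{eq:disconnected-genus}.

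The first step is the virtual dimension of the large-root orbifold space for $\PP^1_{r,s}$.  Take source Euler characteristic $\chi$, signed degree $d$, $n$ stationary insertions, and orbifold markings at $0$ and $\infty$ with the ages of Fan--Wu--You.  The dimension is $2d-\chi/ \cdots$ corrected by the age terms.  For a positive contact $a$ the standard relative count gives $-(a-1)$.  For a negative contact $b$ the age is $(r+b)/r$, and the multiplication by $r$ together with extraction of the $r^0$ coefficient gives, in the constant term, the contribution $-(-1)$ from the codimension shift of Kumaran--Wu.

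Matching the dimension against $\sum_i(k_i+1)$ then gives
\[
 \sum_i k_i \;=\; 2g-2+\ell_0+\ell_\infty+(\text{terms linear in }m_0,m_\infty),
\]
where $\ell_0,\ell_\infty$ are the numbers of positive parts and $m_0,m_\infty$ the numbers of negative parts.  The positive $a$'s drop out because $|\lambda|+\sum b_i=d=|\rho|+\sum c_j$.  I will check the precise constant against the cap case and against the Okounkov--Pandharipande formula when there are no negative contacts.

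The second step is to compare this count with the $u$-weights in the statement.  These weights give total power $u^{\sum k_i-\ell_0-\ell_\infty}$, which equals $u^{2g-2}$ exactly when the linear terms in $m_0,m_\infty$ vanish.  So the key check is that a negative contact adds no net dimension relative to the $u$-count, which is why the statement leaves $\mathcal B_{\mathbf b}$ and $\mathcal B^\dagger_{\mathbf c}$ unweighted.

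I expect this to be the main obstacle.  One must verify from the age formula, after the factor $r$ and the $r^0$ extraction, that each negative marking contributes zero net dimension, taking into account that the markings contribute no descendents.  A consistency check is available on the operator side.  The factor $\cE_b[0]$ has $z$-degree $1$, like $\alpha_b$, and the combinatorial factors $N_B$ merge blocks without changing $u$-homogeneity.  When $|B|$ labels are merged into one block, the Euler characteristic changes in a way that should be compensated in the formula.  I would confirm this bookkeeping directly on the block expansion \eqref{eq:B-operator}, where merging $r$ labels produces a single operator, and match it with the genus shift.

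Finally, the equivalence with $u^{-1}\cE_0(uz)$ in Convention~\ref{conv:u} follows from \eqref{eq:E-coeff}: the coefficient of $z^{k+1}$ in $u^{-1}\cE_0(uz)$ is $u^{k}\cE_0[k]$.  Setting $u=1$ recovers \eqref{eq:KW}, which completes the argument.
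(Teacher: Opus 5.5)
Your strategy is the right one, and it would suffice on its own. Since \eqref{eq:KW} at $u=1$ is given and the inserted $u$-weights are monomial, it is enough to show that for fixed insertions only the genus with $2g-2=\sum_i k_i-\ell_0-\ell_\infty$ can contribute. The paper runs this same virtual-dimension count. It also traces $u$ directly through Kumaran--Wu's calculation, where each block $B$ receives $u^{|B|}$ from coefficient extraction and $u^{-|B|}$ from the large-root prefactor.

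However, the step that carries the content of the lemma is that a negative marking contributes zero net dimension. You defer exactly this step as ``the main obstacle,'' and the bookkeeping you sketch does not establish it:
\begin{itemize}
\item In a coarse count with $2d$ and $-(a-1)$ per positive part, the positive parts do not simply drop out. One has $2d-|\lambda|-|\rho|=\sum_i b_i+\sum_j c_j\neq0$ as soon as a negative contact is present, so the residual depends on the negative contact values and not only on $m_0,m_\infty$.
\item Your ``$-(-1)$'' contribution is unexplained. The shift is not produced by the factor $r$ and the $r^0$ extraction, which do not affect the dimension constraint at all.
\item Calibrating against the cap or Okounkov--Pandharipande cases cannot fix the coefficient of $m_0,m_\infty$, since those cases have no negative contacts.
\item Your operator-side check rests on a false premise. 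Here $\alpha_b=\cE_b(0)=\cE_b[-1]$ has $z$-degree $0$, not $1$. Under $u^{-1}\cE_b(uz)$ it is $\alpha_b$ that picks up $u^{-1}$ while $\cE_b[0]$ picks up $u^0$.
\end{itemize}

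The missing computation is short once you work directly on $\PP^1_{r,s}$. Use $\operatorname{vdim}=2g-2+\int_\beta c_1+\#(\text{markings})-\sum(\text{ages})$. Here $\int_\beta c_1=d/r+d/s$, there are $N+\ell_0+m_0+\ell_\infty+m_\infty$ markings, and the ages are $a_i/r$, $1+b_i/r$, $a'_j/s$, $1+c_j/s$.
\begin{itemize}
\item The constant $1$ in each negative age $1+b_i/r$ cancels that marking's $+1$.
\item The remaining age terms at $0$ sum to $(|\lambda|+\sum_i b_i)/r=d/r$, and likewise to $d/s$ at $\infty$. Together they cancel $\int_\beta c_1$.
\item Hence $\operatorname{vdim}=2g-2+N+\ell_0+\ell_\infty$, independent of $r,s$ and of $m_0,m_\infty$. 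Stationarity then gives $\sum_i k_i-\ell_0-\ell_\infty=2g-2$ per component, and summing over components gives the same with $2g-2=-\chi(C)$.
\end{itemize}
No Euler-characteristic bookkeeping on merged blocks in \eqref{eq:B-operator} is needed. The blocks are an algebraic feature of the operator formula, not a change of source curve. Once the dimension constraint is independent of $m_0,m_\infty$, leaving $\mathcal B_{\mathbf b}$ and $\mathcal B^\dagger_{\mathbf c}$ unweighted is forced by this count together with \eqref{eq:KW} at $u=1$.
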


\begin{proof}
Kumaran--Wu retain Johnson's genus variable through the large-root calculation and specialize to $u=1$ only at the end; see the reductions leading to their equations (37)--(39) and Theorems~4.16 and 4.18 in \cite{KW}.  In that calculation a positive relative marking $a>0$ is produced by the leading term $\alpha_a/(ua)$, while a stationary insertion is represented before coefficient extraction by $u^{-1}\cE_0(uz)$, whose coefficient of $z^{k+1}$ is $u^k\cE_0[k]$.

There is no residual power of $u$ in a negative block.  For a block $B$, coefficient extraction contributes $u^{|B|}\cE_{b_B}[0]$, and the large-root prefactor contributes one factor $u^{-1}$ for each of the $|B|$ negative markings.  These powers cancel block by block.  The same cancellation holds at $\infty$, and adjunction does not change the power of $u$.

The virtual dimension gives an independent homogeneity check.  Suppose a connected component $C_0$ carries $N$ stationary markings, $\ell_0$ positive and $m_0$ negative relative markings over $0$, and $\ell_\infty$ positive and $m_\infty$ negative relative markings over $\infty$.  Write the corresponding contact orders as $a_i,b_i$ over $0$ and $a'_j,c_j$ over $\infty$, and denote the signed degree of this component by
\[
  d_{C_0}=\sum_{i=1}^{\ell_0}a_i+\sum_{i=1}^{m_0}b_i
   =\sum_{j=1}^{\ell_\infty}a'_j+\sum_{j=1}^{m_\infty}c_j.
\]
In the $r$th and $s$th root theory, the positive contacts have ages $a_i/r$ and $a'_j/s$, while the negative contacts have ages $1+b_i/r$ and $1+c_j/s$.  The orbifold virtual-dimension formula therefore gives
\begin{align}
\label{eq:virtual-dimension}
 \operatorname{vdim}_{\CC}
 ={}&2g-2+N+\ell_0+m_0+\ell_\infty+m_\infty
     +\frac{d_{C_0}}{r}+\frac{d_{C_0}}{s}
 \notag\\
 &-\sum_{i=1}^{\ell_0}\frac{a_i}{r}
  -\sum_{i=1}^{m_0}\left(1+\frac{b_i}{r}\right)
  -\sum_{j=1}^{\ell_\infty}\frac{a'_j}{s}
  -\sum_{j=1}^{m_\infty}\left(1+\frac{c_j}{s}\right)
 \notag\\
 ={}&2g-2+N+\ell_0+\ell_\infty.
\end{align}
The last equality uses the two displayed identities for $d_{C_0}$: the integral age contributions of the negative markings cancel the $m_0+m_\infty$ marking terms, and the fractional age contributions cancel the two root-degree corrections.  Stationarity therefore imposes
\begin{equation}
\label{eq:stationary-dimension}
  \sum_{i=1}^{N}(k_i+1)
  =2g-2+N+\ell_0+\ell_\infty,
  \qquad
  \sum_{i=1}^{N}k_i-\ell_0-\ell_\infty=2g-2.
\end{equation}
The operator factors contribute exactly $u^{\sum_i k_i-\ell_0-\ell_\infty}$, hence $u^{2g-2}$.  Summing \eqref{eq:stationary-dimension} over connected components gives the same statement with $2g-2=-\chi(C)$ in the disconnected theory.  Setting $u=1$ recovers the genus-suppressed identity.
\end{proof}

\begin{remark}
In what follows, matrix elements with unequal signed degrees, or with a common signed degree $d<0$, vanish by the energy grading.  At signed degree zero, only the empty-profile stationary vacuum is identified enumeratively.  Other signed-degree-zero coefficients of the master tau function are retained as auxiliary algebraic sectors and may be nonzero.  Unless explicitly called the vacuum sector, every relative-invariant coefficient below is understood to have positive signed degree $d>0$.
\end{remark}

\section{Finite nilpotent operators for chosen negative contacts}

Fix once and for all a finite ordered list of chosen negative contacts at $0$,
\[
    \mathbf b=(b_1,\ldots,b_m),\qquad b_i<0,
\]
and a finite ordered list at $\infty$,
\[
    \mathbf c=(c_1,\ldots,c_{m'}),\qquad c_j<0.
\]
Introduce commuting central nilpotent variables
\[
  \eps_1,\ldots,\eps_m,
  \qquad
  \del_1,\ldots,\del_{m'},
  \qquad
  \eps_i^2=\del_j^2=0.
\]
They commute with one another, with the scalar parameters, and with every infinite-wedge operator.  For $B\subseteq[m]$ write $\eps_B=\prod_{i\in B}\eps_i$ and $b_B=\sum_{i\in B}b_i$; define $N_B$ by \eqref{eq:block-weight}.  For $C\subseteq[m']$ use the analogous notation $\del_C,c_C,N_C$.

\begin{definition}[Chosen negative-contact operators]
\label{def:N-chosen}
Define
\begin{equation}
\label{eq:NL-chosen}
 \cN^L_{\mathbf b}(Q;\eps)
 =\prod_{i=1}^{m}
   \exp\left(
     \sum_{\substack{B\subseteq[m]\,\mathrm{nonempty}\\ \min B=i}}
       Q^{b_B/2}N_B\eps_B\cE_{b_B}[0]
   \right),
\end{equation}
where the product is ordered increasingly in $i$.  Define
\begin{equation}
\label{eq:NR-chosen}
 \cN^R_{\mathbf c}(Q;\del)
 =\overleftarrow{\prod}_{j=1}^{m'}
   \exp\left(
     \sum_{\substack{\varnothing\ne C\subseteq[m']\\ \min C=j}}
       Q^{c_C/2}N_C\del_C\cE_{c_C}[0]^\dagger
   \right),
\end{equation}
where the arrow means that the factors are multiplied in the decreasing order $j=m',m'-1,\ldots,1$.  If $\cN^L_{\mathbf c}(Q;\del)$ denotes the operator obtained from \eqref{eq:NL-chosen} by replacing $(\mathbf b,\eps,m)$ with $(\mathbf c,\del,m')$, then the algebraic adjoint convention \eqref{eq:algebraic-adjoint} gives
\begin{equation}
\label{eq:NR-adjoint}
  \cN^R_{\mathbf c}(Q;\del)
  =\bigl(\cN^L_{\mathbf c}(Q;\del)\bigr)^\dagger.
\end{equation}
For an empty negative list, the corresponding chosen-contact operator is the identity.
\end{definition}

In what follows, we need to extract coefficients of specific monomials from expressions. Our notation for that is: $[\eps_B]F$ is the coefficient of $\eps_S$ in $F$.

\begin{lemma}[Top coefficient equals the Kumaran--Wu block]
\label{lem:top-coeff}
For every subset $S\subseteq[m]$,
\begin{equation}
\label{eq:subset-coeff-left}
 [\eps_S]\cN^L_{\mathbf b}(Q;\eps)
 =Q^{b_S/2}\sum_{P\in\Part(S)}
   \left(\prod_{B\in P}N_B\right)
   \prod_{B\in P}^{\longrightarrow}\cE_{b_B}[0].
\end{equation}
In particular,
\begin{equation}
\label{eq:top-coeff-left}
 [\eps_1\cdots\eps_m]\cN^L_{\mathbf b}(Q;\eps)
 =Q^{(b_1+\cdots+b_m)/2}\mathcal B_{\mathbf b}.
\end{equation}
For every subset $T\subseteq[m']$,
\begin{equation}
\label{eq:subset-coeff-right}
 [\del_T]\cN^R_{\mathbf c}(Q;\del)
 =Q^{c_T/2}\sum_{P'\in\Part(T)}
   \left(\prod_{C\in P'}N_C\right)
   \prod_{C\in P'}^{\longleftarrow}\cE_{c_C}[0]^\dagger.
\end{equation}
In particular,
\begin{equation}
\label{eq:top-coeff-right}
 [\del_1\cdots\del_{m'}]\cN^R_{\mathbf c}(Q;\del)
 =Q^{(c_1+\cdots+c_{m'})/2}
  \mathcal B^\dagger_{\mathbf c}.
\end{equation}
\end{lemma}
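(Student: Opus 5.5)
The plan is a direct expansion, using only that the $\eps_i$ are central, commuting and square-zero. For each $i$ write
\[
X_i=\sum_{\substack{B\subseteq[m]\\ \min B=i}}Q^{b_B/2}N_B\eps_B\cE_{b_B}[0],
\]
so that $\cN^L_{\mathbf b}=\exp(X_1)\cdots\exp(X_m)$ in increasing order. I will expand each exponential as $\sum_{k\ge0}X_i^k/k!$ and then take the coefficient of $\eps_S$.

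The key observation concerns any product of terms from $X_i$. Each such term contains $\eps_i$, since $i\in B$. Hence every product of two or more terms of $X_i$ contains $\eps_i^2=0$, and so $\exp(X_i)=1+X_i$ exactly. The factors $\exp(X_i)$ are therefore finite sums, and no convergence issues arise. Expanding the ordered product $(1+X_1)\cdots(1+X_m)$ gives a sum over choices, for each $i$, of either $1$ or a single block $B_i$ with $\min B_i=i$. The monomial in $\eps$ attached to a choice is $\prod\eps_{B_i}$, which is nonzero only if the chosen blocks are pairwise disjoint. In that case it equals $\eps_{\bigcup B_i}$.

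Next I read off the coefficient of $\eps_S$. The contributing choices are exactly the families of pairwise disjoint nonempty blocks whose union is $S$ and which have distinct minima, namely the chosen indices $i$. Such families correspond bijectively to set partitions $P\in\Part(S)$: given $P$, each block's minimum determines the index $i$ at which it is chosen. The operator factors appear in increasing order of $i$, which is precisely the order of increasing minima in \eqref{eq:B-operator}. The scalars are
\[
\prod_{B\in P}Q^{b_B/2}=Q^{b_S/2}\quad\text{and}\quad \prod_{B\in P}N_B.
\]
Because the $\eps$'s are central, they can be pulled out past all operators without changing the ordering of the $\cE$'s. This proves \eqref{eq:subset-coeff-left}, and taking $S=[m]$ gives \eqref{eq:top-coeff-left}. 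For $S=\varnothing$ the formula returns $1$, consistent with \eqref{eq:empty-block-conventions}.

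For the right-hand statements I will apply $\dagger$ to the left-hand result with $\mathbf c,\del$ in place of $\mathbf b,\eps$, using \eqref{eq:NR-adjoint}. Since $\dagger$ fixes the scalars $Q,\del_j$ and is linear over them, it commutes with extracting the coefficient of $\del_T$. Then
\[
[\del_T]\cN^R_{\mathbf c}=\bigl([\del_T]\cN^L_{\mathbf c}\bigr)^\dagger .
\]
Applying $\dagger$ to the ordered product $\prod^{\longrightarrow}\cE_{c_C}[0]$ reverses it, giving $\prod^{\longleftarrow}\cE_{c_C}[0]^\dagger$. This yields \eqref{eq:subset-coeff-right}, and with $T=[m']$ it yields \eqref{eq:top-coeff-right} via \eqref{eq:Bdagger-operator}.

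Alternatively, one can expand \eqref{eq:NR-chosen} directly: the decreasing order of factors matches the reversed block order. Either way, I expect the only real point of care to be this bookkeeping. One must check that the order of the chosen factors matches the increasing-minima order, and that the adjoint reverses it; the nilpotency argument itself is routine.
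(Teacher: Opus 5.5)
Your proposal is correct and follows the paper's proof: you use the same observation that each exponent squares to zero, so $\exp(X_i)=1+X_i$; the same bijection between contributing choices and set partitions of $S$, with each block selected at its minimum; and the same increasing-minima ordering. The only difference is on the right: you obtain \eqref{eq:subset-coeff-right} by applying $\dagger$ to the left-hand formula via \eqref{eq:NR-adjoint}, whereas the paper repeats the direct expansion with the decreasing outer order (which you also note as an alternative), and both routes are fine.
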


\begin{proof}
For fixed $i$, every monomial in the exponent of the $i$th factor of \eqref{eq:NL-chosen} contains $\eps_i$.  Hence that exponent squares to zero and its exponential equals one plus the exponent.  A nonzero contribution to $[\eps_S]$ is therefore obtained by choosing disjoint nonempty subsets $B\subseteq S$ whose union is $S$, namely a set partition $P$ of $S$.  The factor indexed by $\min B$ contributes exactly $Q^{b_B/2}N_B\cE_{b_B}[0]$, and the outer product orders the selected blocks by increasing minima.  This proves \eqref{eq:subset-coeff-left} and \eqref{eq:top-coeff-left}.

The same square-zero argument applies to each exponent in \eqref{eq:NR-chosen}.  Its outer product is decreasing in the minima, so the selected adjoint factors occur in exactly the reverse of the order used to define $\mathcal B_{\mathbf c}$.  This gives \eqref{eq:subset-coeff-right} and \eqref{eq:top-coeff-right}.
\end{proof}

\begin{lemma}[Group-like property]
\label{lem:group-like}
The operators $\cN^L_{\mathbf b}(Q;\eps)$ and $\cN^R_{\mathbf c}(Q;\del)$ are group-like elements of the completed infinite-wedge group.  The descendant operator
\begin{equation}
\label{eq:D-def}
  \cD^u(\mathbf x)=\exp\left(\sum_{k\ge0}x_ku^k\cE_0[k]\right)
\end{equation}
is group-like.  Hence
\begin{equation}
\label{eq:M-def}
  \cM^u_{\mathbf b,\mathbf c}(Q;\mathbf x;\eps,\del)
  =\cN^L_{\mathbf b}(Q;\eps)\,\cD^u(\mathbf x)\,\cN^R_{\mathbf c}(Q;\del)
\end{equation}
is group-like.
\end{lemma}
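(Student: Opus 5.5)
The plan is to reduce everything to one standard fact: the exponential of an element of the (completed) Lie algebra $\widehat{\mathfrak{gl}}(\infty)$ spanned by the bilinears $E_{ij}$ and the central element is group-like. Group-like here means lying in the completed group $\widehat{GL}(\infty)$, equivalently satisfying the fermionic bilinear identity $\sum_k \psi_k g\otimes\psi_k^* g = \sum_k g\psi_k\otimes g\psi_k^*$. Products of group-like elements are group-like, so \eqref{eq:M-def} follows once each factor is treated. The work therefore consists of checking three things: each exponent is a $\widehat{\mathfrak{gl}}(\infty)$ element with coefficients in a suitable formal scalar ring, the exponentials make sense in the appropriate completion, and the relevant completions are compatible.

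First I would check that each coefficient $\cE_a[r]$ is such a Lie algebra element. By \eqref{eq:E-def} and \eqref{eq:E-coeff},
\[
\cE_a[r]=\sum_k \frac{(k-a/2)^{r+1}}{(r+1)!}E_{k-a,k}+(\text{scalar}),
\]
so $\cE_a[r]$ is a single-diagonal bilinear (band width $|a|$) plus a central constant. It therefore lies in $\widehat{\mathfrak{gl}}(\infty)$, and its exponential acts on $\mathcal F$ because, on any fixed-energy vector, only finitely many terms are nonzero.

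For $\cD^u(\mathbf x)$, the exponent is diagonal ($a=0$), so all terms commute. I would treat it as a formal series in the $x_k$ with coefficients diagonal operators, the standard Okounkov--Pandharipande group element. The scalar parts only contribute a central factor, which does not affect group-likeness.

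For $\cN^L_{\mathbf b}$, each exponent is a finite sum of $\cE_{b_B}[0]$ with nilpotent scalar coefficients $Q^{b_B/2}N_B\eps_B$. The exponential is literally $1+X_i$ by the square-zero observation used in the proof of Lemma~\ref{lem:top-coeff}, and this equals $\exp(X_i)$ with $X_i$ in the Lie algebra tensored with the scalar ring. I would verify the bilinear identity directly in this case: since $X_i^2=0$, the identity reduces to the commutator relation $[X_i\otimes1+1\otimes X_i,\Omega]=0$ for the Casimir $\Omega=\sum\psi_k\otimes\psi_k^*$, which holds for any bilinear plus a central element. For $\cN^R_{\mathbf c}$ I would use \eqref{eq:NR-adjoint} together with the fact that the adjoint anti-involution preserves the group: it sends $\psi_k\leftrightarrow\psi_k^*$ up to the standard convention, maps $E_{ij}$ to $E_{ji}$, and fixes scalars by \eqref{eq:algebraic-adjoint}. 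Alternatively, each factor is the exponential of a $\cE_{-c_C}[0]$ combination by \eqref{eq:E-adjoint}, so the same argument applies verbatim.

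The main obstacle is justifying the formal setting. The nilpotent coefficients $\eps_B$, the half-integer powers of $Q$, and the lower-triangular versus upper-triangular completions must coexist, so that the product $\cN^L\cD\cN^R$ is well defined and the bilinear identity survives. I would handle this by working over $R=\mathbb{Q}[Q^{\pm1/2},u][\eps,\del][[\mathbf x]]$ and noting two facts. The $\cN$ factors are finite polynomials in operators of fixed energy shift. $\cD$ preserves energy and is a formal series in $\mathbf x$. Hence every matrix element between energy eigenvectors is a well-defined element of $R$, and the bilinear identity can be checked coefficientwise in $\mathbf x$.
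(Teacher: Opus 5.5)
Your proposal is correct and follows essentially the same route as the paper. You observe that each $\cE_a[r]$ is a normally ordered bilinear plus a central scalar, so the nilpotent exponentials in $\cN^L_{\mathbf b}$ and $\cN^R_{\mathbf c}$ (the latter via $\cE_a[r]^\dagger=\cE_{-a}[r]$) and the commuting diagonal exponential $\cD^u(\mathbf x)$ are group-like, and products of group-like elements are group-like; your direct check that, because $X_i^2=0$, the bilinear identity for $1+X_i$ reduces to $[X_i\otimes1+1\otimes X_i,\Omega]=0$ is just a hands-on version of the paper's remark that the polynomial bilinear relation survives the finite Artinian base change.
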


\begin{proof}
Each $\cE_a[r]$ is the normally ordered infinite-wedge action of a one-particle matrix, up to the standard scalar central term in the diagonal case, and hence lies in the completed Lie algebra of the infinite-wedge group.  Every factor in \eqref{eq:NL-chosen} and \eqref{eq:NR-chosen} is an exponential of such a Lie algebra element with a central nilpotent coefficient, so it is group-like after base change to the finite Artinian nilpotent coefficient ring.  The operator \eqref{eq:D-def} is an exponential of commuting diagonal Lie algebra elements, and products of group-like elements are group-like.  The fermionic bilinear relation, and equivalently the Pl\"ucker relations among matrix elements, are polynomial; consequently they remain valid under this finite Artinian base change.
\end{proof}

\section{The chosen-contact master tau function}

Let
\begin{equation}
\label{eq:Gamma-def}
  \Gamma_+(\mathbf P)=\exp\left(\sum_{k\ge1}\frac{P_k}{k}\alpha_k\right),
  \qquad
  \Gamma_-(\mathbf R)=\exp\left(\sum_{k\ge1}\frac{R_k}{k}\alpha_{-k}\right),
\end{equation}
where $\mathbf P=(P_1, P_2,...)$ and $\mathbf R=(R_1,R_2,...)$ are independent hierarchy times.  For the chosen negative lists $\mathbf b,\mathbf c$, first define the unrescaled master series
\begin{equation}
\label{eq:unrescaled-master-tau}
 \widetilde\Theta_n^{\mathbf b,\mathbf c}(\mathbf P,\mathbf R)
 =\left\langle n\right\rvert
 \Gamma_+(\mathbf P)\,
 \cM^u_{\mathbf b,\mathbf c}(Q;\mathbf x;\eps,\del)\,
 \Gamma_-(\mathbf R)
 \left\lvert n\right\rangle.
\end{equation}
Equivalently, \eqref{eq:unrescaled-master-tau} is the charge-zero vacuum expectation obtained by placing $T^{-n}$ on the left and $T^n$ on the right.  Only after the hierarchy has been formulated in the independent times do we make the continuous specialization
\begin{equation}
\label{eq:PR}
  P_k=\frac{Q^{k/2}}{u}p_k,
  \qquad
  R_k=\frac{Q^{k/2}}{u}q_k,
\end{equation}
and set
\begin{equation}
\label{eq:master-tau}
  \Theta_n^{\mathbf b,\mathbf c}(\mathbf p,\mathbf q)
  :=\left.\widetilde\Theta_n^{\mathbf b,\mathbf c}(\mathbf P,\mathbf R)
  \right|_{\eqref{eq:PR}}.
\end{equation}
The weighted completions in which these series and the specialization are defined are specified in Remark~\ref{sec:completion}.

We define the unreduced empty-profile degree-zero convention used in this note by the vacuum eigenvalues of the diagonal operators:
\begin{equation}
\label{eq:D-vacuum}
  \langle\cD^u(\mathbf x)\rangle
  =\exp\left(\sum_{k\ge0}x_ku^k[z^{k+1}]\frac{1}{\varsig(z)}\right).
\end{equation}
Thus \eqref{eq:D-vacuum}, rather than a reduced normalization, is the definition of the empty-profile degree-zero sector here; its constant term in the descendant variables is $1$.  Dividing by this scalar would not affect the bilinear hierarchy in the profile variables, but it would change the disconnected enumerative convention, so we retain the unreduced factor.

Positive partitions are allowed to be empty, and we set
\[
  p_\varnothing=q_\varnothing=1.
\]
For a positive partition $\lambda$ we write $p_\lambda=\prod_i p_{\lambda_i}$, and similarly $q_\rho=\prod_jq_{\rho_j}$.  For $S\subseteq[m]$ and $T\subseteq[m']$ we set $b_S=\sum_{i\in S}b_i$ and $c_T=\sum_{j\in T}c_j$, with empty sums equal to $0$.

\begin{proposition}[Coefficient identification]
\label{prop:coeff-id}
Let $S\subseteq[m]$ and $T\subseteq[m']$.  Let $\lambda,\rho$ be positive partitions and set
\begin{equation}
\label{eq:degree-ST}
  d_0=|\lambda|+b_S,
  \qquad
  d_\infty=|\rho|+c_T.
\end{equation}
If $d_0\ne d_\infty$, or if $d_0=d_\infty<0$, then
\[
 [p_\lambda q_\rho\eps_S\del_T]\Theta_0^{\mathbf b,\mathbf c}=0.
\]
If $d=d_0=d_\infty>0$, then
\begin{align}
\label{eq:coeff-id}
&[p_\lambda q_\rho\eps_S\del_T]\Theta_0^{\mathbf b,\mathbf c}
=
\frac{Q^d}{|\Aut(\lambda)|\,|\Aut(\rho)|}
\sum_g u^{2g-2}
\left\langle \lambda,\mathbf b_S\,\middle|\,
  \exp\left(\sum_{k\ge0}x_k\tau_k(\omega)\right)
\,\middle|\,\rho,\mathbf c_T\right\rangle^{\bullet}_{g,d}.
\end{align}
Here $\mathbf b_S$ and $\mathbf c_T$ are the sublists in the inherited order.  If $d=0$, the only degree-zero normalization specified here is the empty-profile vacuum sector
\begin{equation}
\label{eq:empty-vacuum}
 [p_\emptyset q_\emptyset\eps_\emptyset\del_\emptyset]\Theta_0^{\mathbf b,\mathbf c}
 =\langle\cD^u(\mathbf x)\rangle
 =\exp\left(\sum_{k\ge0}x_ku^k[z^{k+1}]\frac1{\varsig(z)}\right),
\end{equation}
whose constant term in $\mathbf x$ is $1$.  Other signed-degree-zero coefficients are part of the algebraic master series and can be nonzero, but no nonempty such coefficient is identified here with a relative invariant.
\end{proposition}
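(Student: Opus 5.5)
The plan is to expand $\Theta_0^{\mathbf b,\mathbf c}$ and extract the coefficient directly.

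\emph{Expansion of the vertex operators.} Write $\Gamma_+(\mathbf P)=\sum_\lambda \frac{P_\lambda}{z_\lambda}\alpha_\lambda$ with $\alpha_\lambda=\prod_i\alpha_{\lambda_i}$, and similarly for $\Gamma_-(\mathbf R)$ with $\alpha_{-\rho}$. Here $z_\lambda=|\Aut(\lambda)|\prod_i\lambda_i$. After the specialization \eqref{eq:PR}, the monomial $p_\lambda q_\rho$ carries the factor $Q^{|\lambda|/2+|\rho|/2}u^{-\ell(\lambda)-\ell(\rho)}$. Together with the prefactor $1/z_\lambda z_\rho$ this produces $\frac{1}{|\Aut\lambda||\Aut\rho|}\prod_i\frac{\alpha_{\lambda_i}}{u\lambda_i}\prod_j\frac{\alpha_{-\rho_j}}{u\rho_j}$, which is exactly the $u$-refined positive-part normalization of Lemma~\ref{lem:u-restore}.

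\emph{Extraction of the nilpotent coefficient.} Because $\eps,\del$ are central and $\mathcal M^u$ factors as in \eqref{eq:M-def}, the coefficient $[\eps_S\del_T]$ of $\mathcal M^u$ equals $[\eps_S]\cN^L\cdot\cD^u(\mathbf x)\cdot[\del_T]\cN^R$. By Lemma~\ref{lem:top-coeff}, applied to the sublists $\mathbf b_S,\mathbf c_T$, this is $Q^{b_S/2+c_T/2}\,\mathcal B_{\mathbf b_S}\cD^u(\mathbf x)\mathcal B^\dagger_{\mathbf c_T}$. This step needs two checks. First, the block weights $N_B$ for $B\subseteq S$ must agree with those defined intrinsically for the relabeled list $\mathbf b_S$; they do, since $N_B$ depends only on the inherited order of the elements of $B$. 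Second, the reversed ordering must match \eqref{eq:Bdagger-operator}, which is part of Lemma~\ref{lem:top-coeff}. In total the coefficient is
\[
\frac{Q^{(d_0+d_\infty)/2}}{|\Aut\lambda||\Aut\rho|}\Big\langle \prod\tfrac{\alpha_{\lambda_i}}{u\lambda_i}\,\mathcal B_{\mathbf b_S}\,\cD^u(\mathbf x)\,\mathcal B^\dagger_{\mathbf c_T}\prod\tfrac{\alpha_{-\rho_j}}{u\rho_j}\Big\rangle .
\]

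\emph{Vanishing by energy.} By \eqref{eq:energy-commutator}, the left string $\alpha_\lambda\mathcal B_{\mathbf b_S}$ lowers energy by $d_0$: each factor $\cE_a[r]$ shifts $H$ by $-a$, and every partition summand of $\mathcal B_{\mathbf b_S}$ has total index $b_S$. The operator $\cD^u$ commutes with $H$. The right string raises energy by $d_\infty$, using $\cE_c[0]^\dagger=\cE_{-c}[0]$ from \eqref{eq:E-adjoint}. The vacuum expectation is therefore zero unless $d_0=d_\infty$. If the common value $d$ is negative, the state $\cD^u\mathcal B^\dagger\alpha_{-\rho}|0\rangle$ has energy $d<0$, which is impossible since the spectrum of $H$ on $\mathcal F^{(0)}$ is nonnegative. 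When $d_0=d_\infty=d$, the $Q$-power is $Q^d$.

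\emph{Identification for $d>0$.} Expand $\cD^u(\mathbf x)=\exp(\sum_k x_ku^k\cE_0[k])$. The $\cE_0[k]$ commute, so the coefficient of $\prod x_{k_i}$ matches $\prod u^{k_i}\cE_0[k_i]$ with the factorial weights of $\exp(\sum x_k\tau_k)$. Lemma~\ref{lem:u-restore}, i.e.\ the $u$-refined form of \eqref{eq:KW} for the lists $(\lambda,\mathbf b_S)$ and $(\rho,\mathbf c_T)$, then identifies the bracket with $\sum_g u^{2g-2}\langle\cdots\rangle^\bullet_{g,d}$. This gives \eqref{eq:coeff-id}.

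For the vacuum case $\lambda=\rho=\emptyset$, $S=T=\emptyset$, the coefficient is $\langle\cD^u(\mathbf x)\rangle$, which is \eqref{eq:D-vacuum} by definition. Its value follows because $E_{kk}$ for $k\ne0$ annihilates the vacuum after normal ordering, so only the scalar term of $\cE_0(z)$ survives.

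\emph{Main obstacle.} I expect the hardest part to be bookkeeping rather than any new idea. Two points need care: the subset-relabeling compatibility of $N_B$ and of the whole-adjoint ordering with the KW conventions, and checking that coefficient extraction commutes with the specialization \eqref{eq:PR} in the weighted completion. For the latter I would argue that each monomial $p_\lambda q_\rho\eps_S\del_T$ receives contributions from a single finite term.
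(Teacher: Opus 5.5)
Your proposal is correct and follows essentially the same route as the paper. Both expand $\Gamma_\pm$, identify the $[\eps_S\del_T]$ coefficient with $Q^{(b_S+c_T)/2}\mathcal B_{\mathbf b_S}\cD^u(\mathbf x)\mathcal B^\dagger_{\mathbf c_T}$ via Lemma~\ref{lem:top-coeff}, apply the $u$-refined Kumaran--Wu formula, and obtain the vanishing from the energy grading together with nonnegativity of the charge-zero spectrum. The paper leaves implicit the two bookkeeping points you flag, namely order-preserving relabeling of $N_B$ and of the block order, and compatibility of coefficient extraction with \eqref{eq:PR}; your remarks handle both correctly.
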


\begin{proof}
The standard expansion of the Heisenberg vertex operators gives
\begin{equation}
\label{eq:Gamma-exp}
 \Gamma_+(\mathbf P)=\sum_\lambda\frac{P_\lambda}{z(\lambda)}\prod_i\alpha_{\lambda_i},
 \qquad
 \Gamma_-(\mathbf R)=\sum_\rho\frac{R_\rho}{z(\rho)}\prod_j\alpha_{-\rho_j},
\end{equation}
where $z(\lambda)=|\Aut(\lambda)|\prod_i\lambda_i$.  With \eqref{eq:PR}, the coefficient of $p_\lambda$ in $\Gamma_+$ is
\[
  \frac{Q^{|\lambda|/2}}{|\Aut(\lambda)|}
  \prod_i\frac{\alpha_{\lambda_i}}{u\lambda_i},
\]
and similarly for the right profile.  Lemma~\ref{lem:top-coeff} identifies the negative-contact coefficients with
\[
 Q^{b_S/2}\mathcal B_{\mathbf b_S},
 \qquad
 Q^{c_T/2}\mathcal B^\dagger_{\mathbf c_T}.
\]
Expanding $\cD^u(\mathbf x)$ gives the exponential of stationary insertions.  Lemma~\ref{lem:u-restore} supplies the homogeneous genus variable.  Substitution in the whole-adjoint tube formula \eqref{eq:KW} gives \eqref{eq:coeff-id}.  The total $Q$-power is
\[
  \frac12(|\lambda|+b_S)+\frac12(|\rho|+c_T),
\]
which equals $d$ when the signed degrees agree.

For the vanishing statements, use the energy operator \eqref{eq:energy-operator}.  By \eqref{eq:energy-commutator}, every monomial contributing to the selected matrix element is homogeneous of energy degree
\begin{equation}
\label{eq:energy-degree}
  -|\lambda|-b_S+c_T+|\rho|=d_\infty-d_0.
\end{equation}
Because $H\lvert0\rangle=0=\langle0\rvert H$, a vacuum matrix element of nonzero energy degree vanishes, proving the assertion when $d_0\ne d_\infty$.  If the common signed degree $d$ is negative, then each term in
$\mathcal B^\dagger_{\mathbf c_T}\prod_j\alpha_{-\rho_j}\lvert0\rangle$
would have energy $|\rho|+c_T=d<0$.  The charge-zero energy spectrum is nonnegative, so this vector is zero.  The empty signed-degree-zero case is exactly \eqref{eq:empty-vacuum}; the argument deliberately does not force other degree-zero terms to vanish.
\end{proof}

\begin{remark}[A non-vacuum signed-degree-zero coefficient]
\label{rem:auxiliary-degree-zero}
The auxiliary degree-zero sector is genuinely present.  Take one left negative contact $\mathbf b=(-2)$, no right negative contacts, $\mathbf x=0$, left profile $\lambda=(1,1)$, and empty right profile.  Coefficient extraction from \eqref{eq:E-comm} gives
\begin{equation}
\label{eq:degree-zero-commutator}
  [\alpha_1,\cE_{-2}[0]]=\alpha_{-1}.
\end{equation}
Since $\alpha_1\lvert0\rangle=0$, it follows that
\[
  \left\langle\alpha_1^2\cE_{-2}[0]\right\rangle
  =\left\langle\alpha_1\alpha_{-1}\right\rangle=1.
\]
The definitions therefore give
\begin{equation}
\label{eq:auxiliary-degree-zero-example}
 \left.[p_1^2\eps_1]\Theta_0^{(-2),\varnothing}\right|_{\mathbf x=0}
 =\frac12\left(\frac{Q^{1/2}}u\right)^2Q^{-1}
  \left\langle\alpha_1^2\cE_{-2}[0]\right\rangle
 =\frac1{2u^2}\ne0.
\end{equation}
This is a non-vacuum $Q^0$ coefficient.  It is retained as an auxiliary algebraic coefficient and is not assigned an enumerative meaning here.  In particular, the positive-degree coefficient equations need not be closed on the enumeratively identified sectors: products can involve \eqref{eq:auxiliary-degree-zero-example}, and the explicit factor $Q$ in the lowest Toda equation makes degree-one relations depend on auxiliary $Q^0$ lattice coefficients.  Simply declaring all such coefficients to be zero would generally destroy the group-like origin of the series and is not justified by the bilinear identities.
\end{remark}

\begin{corollary}[Ordered chosen positive contacts]
\label{cor:chosen-ordered}
Choose ordered positive lists
\[
   \mathbf a=(a_1,\ldots,a_\ell),\qquad
   \mathbf a'=(a'_1,\ldots,a'_{\ell'}),
\]
with $a_i,a'_j>0$.  Introduce labelled variables $s_1,\ldots,s_\ell$ and $t_1,\ldots,t_{\ell'}$ and specialize
\begin{equation}
\label{eq:label-specialization}
  p_k=\sum_{i:a_i=k}s_i,
  \qquad
  q_k=\sum_{j:a'_j=k}t_j.
\end{equation}
Then
\begin{align}
\label{eq:ordered-coeff}
&[s_1\cdots s_\ell t_1\cdots t_{\ell'}\eps_1\cdots\eps_m\del_1\cdots\del_{m'}]
\Theta_0^{\mathbf b,\mathbf c}\big|_{\eqref{eq:label-specialization}}
\notag\\
&\quad=Q^d
\sum_g u^{2g-2}
\left\langle
  a_1,\ldots,a_\ell,b_1,\ldots,b_m
  \,\middle|\,
  \exp\left(\sum_{k\ge0}x_k\tau_k(\omega)\right)
  \,\middle|\,
  a'_1,\ldots,a'_{\ell'},c_1,\ldots,c_{m'}
\right\rangle^{\bullet}_{g,d},
\end{align}
where $d=\sum_i a_i+\sum_i b_i=\sum_j a'_j+\sum_j c_j>0$.  Thus any prescribed finite ordered relative insertions of arbitrary contact orders and positive signed degree are recovered as a coefficient of the chosen-contact master tau sequence.  Taking no contacts and no positive profiles recovers the distinguished degree-zero stationary vacuum; Remark~\ref{rem:auxiliary-degree-zero} explains why it is not the only algebraic $Q^0$ coefficient.
\end{corollary}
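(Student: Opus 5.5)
The plan is to deduce the corollary from Proposition~\ref{prop:coeff-id} by expanding the labelled specialization and collecting the coefficient of the squarefree monomial $s_1\cdots s_\ell t_1\cdots t_{\ell'}$.

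First, substituting \eqref{eq:label-specialization} into $p_\lambda$ gives
\[
p_\lambda=\prod_k\Bigl(\sum_{i:a_i=k}s_i\Bigr)^{m_k(\lambda)}.
\]
The squarefree monomial $s_1\cdots s_\ell$ appears only when $\lambda$ is the partition $\lambda(\mathbf a)$ with parts $a_1,\ldots,a_\ell$. In that case $m_k(\lambda)=\#\{i:a_i=k\}$, and the multinomial coefficient of $\prod_{i:a_i=k}s_i$ in $\bigl(\sum_{i:a_i=k}s_i\bigr)^{m_k}$ is $m_k!$. Taking the product over $k$, the coefficient is $|\Aut(\lambda(\mathbf a))|$. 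The same computation for the $q$ variables gives $|\Aut(\rho(\mathbf a'))|$.

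Next, the extraction $[\eps_1\cdots\eps_m\del_1\cdots\del_{m'}]$ corresponds to $S=[m]$ and $T=[m']$. Proposition~\ref{prop:coeff-id} then gives
\[
[s\,t\,\eps\,\del]\Theta_0\big|_{\eqref{eq:label-specialization}}
=|\Aut\lambda|\,|\Aut\rho|\,[p_\lambda q_\rho\eps_{[m]}\del_{[m']}]\Theta_0 .
\]
The hypothesis $d>0$, with equal signed degrees at $0$ and $\infty$, is exactly what is needed to invoke \eqref{eq:coeff-id}. In that formula the factors $|\Aut|$ cancel, leaving $Q^d\sum_g u^{2g-2}\langle\lambda,\mathbf b\,|\,\cdots\,|\,\rho,\mathbf c\rangle^\bullet_{g,d}$.

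The main point needing care is matching the unordered-profile invariant $\langle\lambda,\mathbf b|\cdots\rangle$ of \eqref{eq:coeff-id} with the ordered-marking invariant in \eqref{eq:ordered-coeff}. In \eqref{eq:KW} the left side is written with an ordered vector $\vec\mu_0$ of labelled relative markings, while the right side depends on the positive parts only through the commuting product $\prod\alpha_{a_i}/a_i$. I would therefore check that the convention in Proposition~\ref{prop:coeff-id} and \eqref{eq:KW} is the labelled-marking convention, so that $\langle\lambda,\ldots\rangle$ means the ordered invariant for any ordering of $\lambda$. This check consists of retracing the proof of Proposition~\ref{prop:coeff-id}: the $1/|\Aut(\lambda)|$ there comes precisely from $1/z(\lambda)$ in \eqref{eq:Gamma-exp}, and the remaining $\prod\alpha_{\lambda_i}/(u\lambda_i)$ matches the labelled formula \eqref{eq:KW}. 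With that in hand, the ordered formula follows, and the negative lists retain their given orders via $\mathbf b_{[m]}=\mathbf b$ and $\mathbf c_{[m']}=\mathbf c$.

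For the final sentence of the corollary, I would take $\ell=\ell'=m=m'=0$ and cite \eqref{eq:empty-vacuum}, with the remaining part referring to Remark~\ref{rem:auxiliary-degree-zero}.
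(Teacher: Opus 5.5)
Your proposal is correct and follows the paper's own proof. The paper likewise notes that, after the labelled specialization, the coefficient of $s_1\cdots s_\ell$ in $p_\lambda/|\Aut(\lambda)|$ is $1$ for the partition underlying $\mathbf a$ and $0$ otherwise; this is equivalent to your multinomial count giving $|\Aut(\lambda)|$ in $p_\lambda$. It then applies Proposition~\ref{prop:coeff-id} with $S=[m]$ and $T=[m']$, exactly as you do. Your extra check makes explicit a point the paper leaves implicit: the profile invariant in \eqref{eq:coeff-id} is the labelled-marking invariant of \eqref{eq:KW}, and it is independent of the ordering because the $\alpha_{a_i}$ commute.
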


\begin{proof}
The coefficient of $s_1\cdots s_\ell$ in $p_\lambda/|\Aut(\lambda)|$ after \eqref{eq:label-specialization} is $1$ exactly for the unordered partition underlying the ordered list $\mathbf a$, and is zero otherwise.  The same applies on the right.  Apply Proposition~\ref{prop:coeff-id} with $S=[m]$ and $T=[m']$.
\end{proof}

\begin{remark}[Formal completion]\label{sec:completion}

All constructions are finite in the nilpotent directions.  Set
\begin{equation}
\label{eq:nilpotent-base-ring}
 \mathbb K_{\mathbf b,\mathbf c}
 =\CC((u))((Q^{1/2}))\otimes_{\CC}
 \frac{\CC[\eps_1,\ldots,\eps_m,\del_1,\ldots,\del_{m'}]}
 {(\eps_1^2,\ldots,\eps_m^2,\del_1^2,\ldots,\del_{m'}^2)}.
\end{equation}
The nilpotent generators are central and commuting.  Consider the polynomial algebra over \eqref{eq:nilpotent-base-ring} in the countably many independent variables $P_k,R_k$ $(k\ge1)$ and $x_j$ $(j\ge0)$, graded by
\begin{equation}
\label{eq:weights}
 \operatorname{wt}(P_k)=\operatorname{wt}(R_k)=k,
 \qquad
 \operatorname{wt}(x_j)=j+1.
\end{equation}
If $\mathcal A_d^{P,R}$ denotes its homogeneous part of weight $d$, define the weighted completion
\begin{equation}
\label{eq:independent-completion}
 \widehat{\mathcal A}^{P,R}_{\mathbf b,\mathbf c}
 :=\prod_{d\ge0}\mathcal A_d^{P,R}.
\end{equation}
Each homogeneous part involves only finitely many variables and monomials.  The series $\widetilde\Theta_n$ belongs to \eqref{eq:independent-completion}.  When primed times occur in a bilinear identity, we use the analogous completed tensor product with a second copy of the time variables.

The formal substitutions in \eqref{eq:hirota} are interpreted coefficientwise in this weighted completion.  For a fixed external weight and a fixed power of $z$, only finitely many terms can contribute, because the index of each Miwa shift matches the weight in \eqref{eq:weights}.  Thus the coefficient extraction $\resz=[z^{-1}]$ is well defined.  This is the formal-residue convention of \eqref{eq:formal-residue}, not a geometric residue at either $0$ or $\infty$.

Define $\widehat{\mathcal A}^{p,q}_{\mathbf b,\mathbf c}$ in the same way, with
$\operatorname{wt}(p_k)=\operatorname{wt}(q_k)=k$ and the same weights for the $x_j$.  The assignment
\begin{equation}
\label{eq:continuous-specialization}
 P_k\longmapsto \frac{Q^{k/2}}u p_k,
 \qquad
 R_k\longmapsto \frac{Q^{k/2}}u q_k
\end{equation}
preserves weight and therefore extends continuously from \eqref{eq:independent-completion} to $\widehat{\mathcal A}^{p,q}_{\mathbf b,\mathbf c}$.  This is why the hierarchy is first written for $\widetilde\Theta_n$ and only then specialized to $\Theta_n$: the Miwa shifts need not be interpreted as endomorphisms of the $p,q$ ring.  Negative contacts contribute only finitely many Laurent powers of $Q^{1/2}$ inside $\cN^L$ and $\cN^R$; after a positive profile is selected and the signed-degree condition is imposed, every enumeratively identified coefficient carries the factor $Q^d$ with $d>0$ (or the distinguished vacuum factor $Q^0$), while other $Q^0$ terms remain auxiliary.
\end{remark}

\section{The hierarchy}

\subsection{Kyoto/Pl\"ucker theorem}

We use the following standard form of the Kyoto construction.  If $M$ is a charge-preserving group-like element in the completed infinite-wedge representation, then
\begin{equation}
\label{eq:tau-M}
  \tau_n^M(\mathbf P,\mathbf R)=
  \left\langle n\right\rvert
  \Gamma_+(\mathbf P)M\Gamma_-(\mathbf R)
  \left\lvert n\right\rangle
\end{equation}
is a tau sequence of the Ueno--Takasaki $2$-Toda hierarchy.  We use one formal residue convention throughout:
\begin{equation}
\label{eq:formal-residue}
  \resz f(z)\,dz:=[z^{-1}]f(z).
\end{equation}
Thus no geometric sign convention at $z=\infty$ is involved.  In the weighted completion of Remark~\ref{sec:completion}, the bilinear identity is
\begin{align}
\label{eq:hirota}
&\resz
 z^{n-n'}e^{\xi(\mathbf P-\mathbf P',z)}
 \tau_n(\mathbf P-[z^{-1}],\mathbf R)
 \tau_{n'}(\mathbf P'+[z^{-1}],\mathbf R')\,dz
\notag\\
&\quad=
\resz
 z^{n-n'}e^{\xi(\mathbf R'-\mathbf R,z^{-1})}
 \tau_{n+1}(\mathbf P,\mathbf R+[z])
 \tau_{n'-1}(\mathbf P',\mathbf R'-[z])\,dz,
\end{align}
where
\[
  \xi(\mathbf P,z)=\sum_{k\ge1}\frac{P_kz^k}{k},
  \quad
  [z^{-1}]=(z^{-1},z^{-2},z^{-3},\ldots),
  \quad
  [z]=(z,z^2,z^3,\ldots).
\]
The signs on the right of \eqref{eq:hirota} correspond to the plus-sign convention for $\Gamma_-(\mathbf R)$ in \eqref{eq:Gamma-def}.  As a direct check, when $M=1$ one has
$\tau_n=\exp(\sum_{k\ge1}P_kR_k/k)$, and substitution into \eqref{eq:hirota} gives identical formal Laurent integrands on the two sides.  The lowest equation is
\begin{equation}
\label{eq:standard-lowest}
   \tau_n\tau_{n,P_1R_1}-\tau_{n,P_1}\tau_{n,R_1}=\tau_{n+1}\tau_{n-1}.
\end{equation}
This is the first Pl\"ucker relation in infinite-wedge form; see Okounkov--Pandharipande \cite[Sec.~4]{OPcompleted}, \cite[Sec.~4.1]{OPequiv}, Johnson \cite[Ch.~I and Ch.~VI]{JohnsonThesis}, and Ueno--Takasaki \cite{UT}.

\begin{theorem}[Chosen-contact $2$-Toda hierarchy]
\label{thm:main}
For every pair of finite ordered negative-contact lists $\mathbf b,\mathbf c$, the unrescaled sequence
$\{\widetilde\Theta_n^{\mathbf b,\mathbf c}\}_{n\in\ZZ}$ defined by \eqref{eq:unrescaled-master-tau} satisfies the full $2$-Toda hierarchy in the independent times $\mathbf P,\mathbf R$, including \eqref{eq:hirota}.  After the continuous specialization \eqref{eq:PR}, the lowest equation becomes
\begin{equation}
\label{eq:lowest-main}
 u^2\left(
 \Theta_n\Theta_{n,p_1q_1}-\Theta_{n,p_1}\Theta_{n,q_1}
 \right)
 =Q\Theta_{n+1}\Theta_{n-1}.
\end{equation}
The positive signed-degree coefficients of $\Theta_0$ identified in Proposition~\ref{prop:coeff-id} and Corollary~\ref{cor:chosen-ordered} therefore occur inside a coupled integrable master system.  The system also contains auxiliary signed-degree-zero coefficients and auxiliary lattice components, and the equations are not asserted to close on the enumeratively identified coefficients alone.
\end{theorem}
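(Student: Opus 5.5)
The argument has three steps: invoke the Kyoto/Pl\"ucker theorem of \eqref{eq:tau-M} for the group-like element $\cM^u_{\mathbf b,\mathbf c}$, transport the lowest equation through the specialization \eqref{eq:PR}, and then read off the coefficient-level consequences. All work is done over the Artinian base ring $\mathbb K_{\mathbf b,\mathbf c}$ of Remark~\ref{sec:completion}.

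First, by Lemma~\ref{lem:group-like}, $M=\cM^u_{\mathbf b,\mathbf c}(Q;\mathbf x;\eps,\del)$ is a charge-preserving group-like element of the completed infinite-wedge group after base change to $\mathbb K_{\mathbf b,\mathbf c}$. Comparing \eqref{eq:unrescaled-master-tau} with \eqref{eq:tau-M} shows that $\widetilde\Theta_n^{\mathbf b,\mathbf c}=\tau_n^M$.

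The substantive point is that the Kyoto theorem remains valid over this base ring. The fermionic bilinear identity $\sum_k \psi_k M\otimes\psi_k^*M=\sum_k M\psi_k\otimes M\psi_k^*$ is equivalent to $M$ commuting with the Casimir $\sum_k\psi_k\otimes\psi_k^*$. Each factor $e^{X}$, with $X$ a normally ordered bilinear (plus central scalar), satisfies this identity because $X\otimes1+1\otimes X$ commutes with the Casimir up to the central term, which cancels. The identity is preserved under products, and nilpotent or formal coefficients cause no difficulty since everything is polynomial in finitely many nilpotents.

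Passing from the fermionic identity to \eqref{eq:hirota} uses the boson--fermion correspondence: $\psi(z)$ and $\psi^*(z)$ act on $\Gamma_\pm$ as the Miwa shifts $[z^{-1}]$ and $[z]$ with exponential factors $e^{\xi}$. This yields \eqref{eq:hirota} coefficientwise in the completion \eqref{eq:independent-completion}, where for fixed weight only finitely many terms contribute. The full hierarchy in $\mathbf P,\mathbf R$ follows, and \eqref{eq:standard-lowest} is obtained from the first nontrivial Taylor coefficient in the auxiliary variables $\mathbf P'-\mathbf P$ and $\mathbf R'-\mathbf R$.

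Next, apply the continuous specialization \eqref{eq:continuous-specialization}, which preserves weight and therefore commutes with coefficientwise identities. By the chain rule, $\partial_{p_1}=(Q^{1/2}/u)\partial_{P_1}$ and $\partial_{q_1}=(Q^{1/2}/u)\partial_{R_1}$. The left side of \eqref{eq:standard-lowest} thus becomes $(u^2/Q)\bigl(\Theta_n\Theta_{n,p_1q_1}-\Theta_{n,p_1}\Theta_{n,q_1}\bigr)$. Multiplying both sides by $Q$ gives \eqref{eq:lowest-main}. The specialization is evaluated only after differentiation, but since it is linear in $P_1$ and $R_1$ it commutes with these derivatives.

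Finally, the structural claims follow from results already established. Proposition~\ref{prop:coeff-id} and Corollary~\ref{cor:chosen-ordered} place the positive-degree invariants among the coefficients of $\Theta_0$, and Remark~\ref{rem:auxiliary-degree-zero} exhibits a nonzero auxiliary coefficient. Together these justify the non-closure caveat, which is an assertion of absence that needs no proof.

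The main obstacle is the first step: stating Kyoto cleanly over a ring with nilpotents and $Q^{1/2}$-Laurent coefficients, including the central term in $\cE_0[k]$ and the $\dagger$-factors in $\cN^R$. I would handle this by checking the Casimir commutation for one exponential factor $e^{\eps X}$ directly. That check is immediate because $\eps^2=0$ gives $e^{\eps X}=1+\eps X$, reducing it to the Lie-algebra statement that a bilinear $X$ commutes with the Casimir.
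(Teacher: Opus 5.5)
Your proposal is correct and follows the paper's proof. Lemma~\ref{lem:group-like} gives group-likeness of $\cM^u_{\mathbf b,\mathbf c}$ over the Artinian base, which feeds the Kyoto bilinear identity; the first Pl\"ucker relation then gives \eqref{eq:standard-lowest}, the chain rule for \eqref{eq:PR} yields \eqref{eq:lowest-main}, and the structural claims are read off from Proposition~\ref{prop:coeff-id} and Remark~\ref{rem:auxiliary-degree-zero}. Your direct Casimir check only makes explicit the base-change step the paper asserts in one line; note that it is $M\otimes M$, not $M$ itself, that commutes with $\sum_k\psi_k\otimes\psi_k^*$.
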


\begin{proof}
By Lemma~\ref{lem:group-like}, the middle operator $\cM^u_{\mathbf b,\mathbf c}$ is group-like over the finite Artinian nilpotent base.  The polynomial fermionic bilinear relation therefore applies after base change and gives \eqref{eq:hirota} for \eqref{eq:unrescaled-master-tau}.  The first Pl\"ucker relation gives \eqref{eq:standard-lowest}.  Since $P_1=Q^{1/2}p_1/u$ and $R_1=Q^{1/2}q_1/u$, one has
\[
  \partial_{P_1}=\frac{u}{Q^{1/2}}\partial_{p_1},
  \qquad
  \partial_{R_1}=\frac{u}{Q^{1/2}}\partial_{q_1},
\]
and the continuous specialization of \eqref{eq:standard-lowest} is \eqref{eq:lowest-main}.  Proposition~\ref{prop:coeff-id} supplies the positive-degree enumerative identification, while Remark~\ref{rem:auxiliary-degree-zero} records the additional algebraic sector.
\end{proof}

\begin{remark}[The lattice components are auxiliary]
\label{rem:auxiliary-n}
Only the positive-degree coefficients of the component $n=0$, together with its empty-profile vacuum, are identified above with relative Gromov--Witten theory.  Even at $n=0$ there are auxiliary signed-degree-zero coefficients, and the components $n\ne0$ are additional auxiliary lattice shifts supplied by the Kyoto construction.  In the ordinary stationary tube without negative contacts, such shifts can often be reinterpreted through a string-direction translation.  With negative contacts, however,
\[
  T^{-n}\cE_b[0]T^n=\cE_b[0]+n\cE_b[-1]=\cE_b[0]+n\alpha_b,
  \qquad b<0,
\]
so the lattice shift changes the negative-contact block operators.  No enumerative interpretation of these shifted negative-contact components is asserted here.  The hierarchy is therefore a master-tau system whose enumerative coefficients lie in the $n=0$ component and whose equations couple them both to auxiliary coefficients at $n=0$ and to the auxiliary components $n\pm1$.
\end{remark}

\subsection{Explicit coefficient equations}

For $S\subseteq[m]$ and $T\subseteq[m']$ define
\begin{equation}
\label{eq:Theta-ST}
 \widetilde\Theta_{n;S,T}
   =[\eps_S\del_T]\widetilde\Theta_n^{\mathbf b,\mathbf c},
 \qquad
 \Theta_{n;S,T}
   =[\eps_S\del_T]\Theta_n^{\mathbf b,\mathbf c}.
\end{equation}

\begin{theorem}[Lowest coupled equation for chosen negative contacts]
\label{thm:coefficient-lowest}
For all $S\subseteq[m]$ and $T\subseteq[m']$,
\begin{align}
\label{eq:coefficient-lowest}
&u^2
\sum_{\substack{A\subseteq S\\ B\subseteq T}}
\Big(
 \Theta_{n;A,B}\,\partial_{p_1}\partial_{q_1}\Theta_{n;S\setminus A,T\setminus B}
 -\partial_{p_1}\Theta_{n;A,B}\,\partial_{q_1}\Theta_{n;S\setminus A,T\setminus B}
\Big)
=
Q\sum_{\substack{A\subseteq S\\ B\subseteq T}}
 \Theta_{n+1;A,B}\Theta_{n-1;S\setminus A,T\setminus B}.
\end{align}
The positive signed-degree coefficients of each $\Theta_{0;S,T}$ are relative invariants with chosen negative sublists $\mathbf b_S,\mathbf c_T$ and arbitrary positive profiles; after the labelled specialization \eqref{eq:label-specialization}, the chosen positive variables select the corresponding terms.  Equation~\eqref{eq:coefficient-lowest} belongs to the full master system: extracting a positive degree can involve the auxiliary $Q^0$ sectors of Remark~\ref{rem:auxiliary-degree-zero} and the auxiliary lattice components, so this is not a closed equation solely among positive-degree invariants.
\end{theorem}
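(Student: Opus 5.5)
The plan is to take the lowest specialized equation \eqref{eq:lowest-main} from Theorem~\ref{thm:main} as the starting point and extract the coefficient of the monomial $\eps_S\del_T$ on both sides. By Theorem~\ref{thm:main}, \eqref{eq:lowest-main} is an identity in the completed ring $\widehat{\mathcal A}^{p,q}_{\mathbf b,\mathbf c}$ of Remark~\ref{sec:completion}, whose coefficient ring contains the finite Artinian algebra $\CC[\eps,\del]/(\eps_i^2,\del_j^2)$. Every element of this ring decomposes uniquely as
\[
\Theta_n=\sum_{S\subseteq[m],\,T\subseteq[m']}\Theta_{n;S,T}\,\eps_S\del_T,
\]
with $\Theta_{n;S,T}$ free of nilpotent variables, as in \eqref{eq:Theta-ST}. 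The operators $\partial_{p_1},\partial_{q_1}$ act coefficientwise and commute with this decomposition, because the $\eps_i,\del_j$ are constants. Likewise the scalars $u^2$ and $Q$ are central.

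The key computation is the coefficient of a product. We have $\eps_A\eps_{A'}=\eps_{A\cup A'}$ when $A\cap A'=\varnothing$, and $\eps_A\eps_{A'}=0$ otherwise, since each $\eps_i^2=0$; the same holds for the $\del$'s. The variables commute, so there are no signs. Hence for any $F,G$ in the ring,
\[
[\eps_S\del_T](FG)=\sum_{A\subseteq S,\,B\subseteq T}F_{A,B}\,G_{S\setminus A,T\setminus B}.
\]
Apply this to each of the three bilinear expressions $\Theta_n\cdot\partial_{p_1}\partial_{q_1}\Theta_n$, $\partial_{p_1}\Theta_n\cdot\partial_{q_1}\Theta_n$, and $\Theta_{n+1}\Theta_{n-1}$. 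Then multiply by $u^2$ and $Q$ respectively; this gives \eqref{eq:coefficient-lowest} verbatim. The ordering in the second product is immaterial, since the sum runs over all complementary pairs.

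The remaining assertions are interpretive and follow from earlier results. First, the positive-degree coefficients of $\Theta_{0;S,T}$ are identified by Proposition~\ref{prop:coeff-id} applied with the given $S,T$; this uses Lemma~\ref{lem:top-coeff} for arbitrary subsets, not only the top coefficient. Second, the labelled selection follows exactly as in Corollary~\ref{cor:chosen-ordered}. Third, the non-closure statement is witnessed by Remark~\ref{rem:auxiliary-degree-zero} together with Remark~\ref{rem:auxiliary-n}: the term $A=\varnothing$, $B=\varnothing$ and the lattice terms $\Theta_{n\pm1}$ bring in auxiliary sectors.

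I expect no serious obstacle. The only point needing care is that coefficient extraction in $\eps,\del$ commutes with the continuous specialization \eqref{eq:PR} and with the weighted completion. This holds because the nilpotent variables are weight zero and lie in the base ring $\mathbb K_{\mathbf b,\mathbf c}$, and the specialization is linear over that ring.
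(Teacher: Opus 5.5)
Your proposal is correct and follows the paper's proof: you take the coefficient of $\eps_S\del_T$ in \eqref{eq:lowest-main}, using that the nilpotent variables are central, commuting and square-zero, so the coefficient of a product splits over complementary pairs $(A,S\setminus A)$, $(B,T\setminus B)$, while $\partial_{p_1}$, $\partial_{q_1}$, $u^2$ and $Q$ pass through the extraction. Your treatment of the interpretive clauses, via Proposition~\ref{prop:coeff-id}, Corollary~\ref{cor:chosen-ordered} and Remarks~\ref{rem:auxiliary-degree-zero} and~\ref{rem:auxiliary-n}, makes explicit what the paper's proof leaves implicit.
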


\begin{proof}
Take the coefficient of $\eps_S\del_T$ in \eqref{eq:lowest-main}.  Since the nilpotent variables are commutative, the coefficient of a product is obtained by splitting the occupied left and right negative labels into two subsets, giving exactly \eqref{eq:coefficient-lowest}.
\end{proof}

The full hierarchy is equally explicit in the independent times.  Taking the coefficient of $\eps_S\del_T$ in \eqref{eq:hirota} gives
\begin{align}
\label{eq:coefficient-hirota}
&\resz
 z^{n-n'}e^{\xi(\mathbf P-\mathbf P',z)}
 \sum_{\substack{A\subseteq S\\ B\subseteq T}}
 \widetilde\Theta_{n;A,B}(\mathbf P-[z^{-1}],\mathbf R)
 \widetilde\Theta_{n';S\setminus A,T\setminus B}(\mathbf P'+[z^{-1}],\mathbf R')\,dz
\notag\\
&\quad=
\resz
 z^{n-n'}e^{\xi(\mathbf R'-\mathbf R,z^{-1})}
 \sum_{\substack{A\subseteq S\\ B\subseteq T}}
 \widetilde\Theta_{n+1;A,B}(\mathbf P,\mathbf R+[z])
 \widetilde\Theta_{n'-1;S\setminus A,T\setminus B}(\mathbf P',\mathbf R'-[z])\,dz.
\end{align}
Expansion of \eqref{eq:coefficient-hirota} in the independent times is the infinite coupled system of bilinear PDEs.  The continuous specialization \eqref{eq:PR} may then be applied to each differential equation; no Miwa shift in the $\mathbf p,\mathbf q$ coefficient ring is required.  Further extraction of $p_\lambda q_\rho$ gives recurrences for fixed chosen positive profiles.  For example,
\begin{equation}
\label{eq:coefficient-derivative-rule}
 [p_\lambda q_\rho]\partial_{p_1}\Theta_{n;S,T}
 =(m_1(\lambda)+1)[p_{\lambda\cup(1)}q_\rho]\Theta_{n;S,T},
\end{equation}
where $m_1(\lambda)$ is the number of parts of $\lambda$ equal to $1$; analogous formulae hold for $q_1$ and for higher derivatives.

\subsection{First KP equations}

The $2$-Toda hierarchy contains KP in each time family.  Put
$\widetilde F_n=\log\widetilde\Theta_n^{\mathbf b,\mathbf c}$; its logarithm is defined because the constant term is invertible in the weighted completion.  The first KP equation in the independent $\mathbf P$ variables is
\begin{equation}
\label{eq:KP-P}
 \widetilde F_{n,P_1P_1P_1P_1}+6\widetilde F_{n,P_1P_1}^2
 +12\widetilde F_{n,P_2P_2}-12\widetilde F_{n,P_1P_3}=0.
\end{equation}
For $F_n=\log\Theta_n^{\mathbf b,\mathbf c}$, the specialization \eqref{eq:PR} gives
\begin{equation}
\label{eq:KP-p}
 u^2\left(F_{n,p_1p_1p_1p_1}+6F_{n,p_1p_1}^2\right)
 +12F_{n,p_2p_2}-12F_{n,p_1p_3}=0.
\end{equation}
Similarly,
\begin{equation}
\label{eq:KP-q}
 u^2\left(F_{n,q_1q_1q_1q_1}+6F_{n,q_1q_1}^2\right)
 +12F_{n,q_2q_2}-12F_{n,q_1q_3}=0.
\end{equation}
Coefficient extraction in the nilpotent variables gives the corresponding logarithmic coupled equations wherever the relevant leading sector is invertible; the bilinear equations \eqref{eq:coefficient-lowest} and \eqref{eq:coefficient-hirota} do not require such invertibility.

\section{Checks and special cases}

\subsection{No negative contacts}

If $m=m'=0$, then $\cN^L=\cN^R=1$ and
\[
  \widetilde\Theta_n
  =\left\langle n\right\rvert
  \Gamma_+(\mathbf P)\cD^u(\mathbf x)\Gamma_-(\mathbf R)
  \left\lvert n\right\rangle.
\]
This is the stationary relative tube tau sequence of Okounkov--Pandharipande.  In their relative theory, the two Toda flows are the ramification/profile variables, not descendent variables \cite[Sec.~4]{OPcompleted}.

\subsection{One negative contact}

For a single chosen left negative contact $b<0$,
\[
  [\eps_1]\cN^L_{(b)}=Q^{b/2}\cE_b[0].
\]
Thus the one-negative-contact generating function is the coefficient of a tau sequence.  It is not asserted to be an independent tau-function; it satisfies the coupled equations obtained from Theorem~\ref{thm:coefficient-lowest}.

\subsection{Two negative contacts}

For two chosen left negative contacts $b_1,b_2<0$,
\begin{equation}
\label{eq:two-neg}
 [\eps_1\eps_2]\cN^L_{(b_1,b_2)}
 =Q^{(b_1+b_2)/2}
 \left(\cE_{b_1}[0]\cE_{b_2}[0]+b_2\cE_{b_1+b_2}[0]\right).
\end{equation}
The commutator extracted from \eqref{eq:E-comm},
$[\cE_a[0],\cE_b[0]]=(a-b)\cE_{a+b}[0]$, shows that interchanging the two labels gives the same left block.

For two right contacts, the decreasing outer order in \eqref{eq:NR-chosen} gives
\begin{align}
\label{eq:two-neg-right}
 [\del_1\del_2]\cN^R_{(c_1,c_2)}
 &=Q^{(c_1+c_2)/2}
 \left(
   \cE_{c_2}[0]^\dagger\cE_{c_1}[0]^\dagger
   +c_2\cE_{c_1+c_2}[0]^\dagger
 \right)
\notag\\
 &=Q^{(c_1+c_2)/2}
 \left(
   \cE_{c_1}[0]^\dagger\cE_{c_2}[0]^\dagger
   +c_1\cE_{c_1+c_2}[0]^\dagger
 \right).
\end{align}
The second equality follows from the adjoint commutator and makes label invariance transparent.  This is the adjoint of the entire left two-contact block, not the product obtained by adjointing its factors without reversing their order.

\subsection{Vacuum check}

Setting $\mathbf x=0$ and taking no negative contacts gives
\[
 \widetilde\Theta_0
 =\langle\Gamma_+(\mathbf P)\Gamma_-(\mathbf R)\rangle
 =\exp\left(\sum_{k\ge1}\frac{P_kR_k}{k}\right),
\]
and, after \eqref{eq:PR},
\[
 \Theta_0
 =\exp\left(\frac1{u^2}\sum_{k\ge1}\frac{Q^k}{k}p_kq_k\right).
\]
Hence
\[
  \partial_{p_1}\partial_{q_1}\log\Theta_0=\frac{Q}{u^2},
\]
which is the vacuum specialization of \eqref{eq:lowest-main}.  Substitution of $\widetilde\Theta_n=\exp(\sum P_kR_k/k)$ into the full identity \eqref{eq:hirota} also makes the two formal Laurent integrands identical, confirming the three right-flow signs.

\end{document}